\documentclass[a4paper,11pt]{article}
\usepackage{hyperref}
\hypersetup{
    colorlinks=true,
    linktoc=all,
    linkcolor=red,     
    citecolor=blue,     
}
\usepackage{graphicx}
\usepackage{amssymb}
\usepackage{latexsym, bm}
\usepackage{multicol}
\usepackage{indentfirst}
\usepackage{amsfonts}
\usepackage{amsmath}
\usepackage{setspace}
\newtheorem{theorem}{Theorem}[section]
\newtheorem{lemma}{Lemma}[section]
\newtheorem{claim}{Claim}

\newtheorem{problem}{Problem}[section]

\newcommand{\ignore}[1]{}

\begin{document}
\begin{spacing}{1}

\title{Ramsey numbers of $K_s+mK_t$ versus $K_n$}
\date{}

\author{
Lulu Dai\footnote{Center for Discrete Mathematics, Fuzhou University,
Fuzhou, 350108, P.~R.~China. Email: {\tt 1415088965@qq.com}. } \;\;  and \;\; Qizhong Lin\footnote{Center for Discrete Mathematics, Fuzhou University,
Fuzhou, 350108, P.~R.~China. Email: {\tt linqizhong@fzu.edu.cn}. Supported in part  by National Key R\&D Program of China (Grant No. 2023YFA1010202) and NSFC (No.\ 12571361).}
}

\maketitle
\begin{abstract}
For integers $m \ge 1$, $s \ge 0$, and $t \ge 1$, let $K_s + mK_t$ denote the join graph of a clique $K_s$ and $m$ vertex-disjoint copies of $K_t$. 
We prove that for fixed integers $m\ge 1$, $t\ge 1$ and $s\ge 0$, 
\[
  R(K_s+mK_t,K_n)=O\!\left(\frac{n^{s+t-1}}{(\log n)^{s+t-2}}\right).
\] 
This settles a problem proposed by Liu and Li (2026).
This bound is tight up to a constant factor for the case $(s,t)=(0,3)$, as it matches the classical result $R(K_3, K_n) = \Theta\!\left(n^2 / \log n\right)$ of Kim (1995).
\end{abstract}

\section{Introduction}

For graphs $G$ and $H$, the Ramsey number $R(G,H)$ is the minimum $N$ such that every red-blue coloring of the edges
of the complete graph $K_N$ contains a red copy of $G$ or a blue copy of $H$.
Equivalently, $R(G,K_n)$ is the minimum $N$ such that every graph on $N$ vertices that is $G$-free
contains an independent set of size $n$.
Estimating the Ramsey number $R(K_s,K_n)$ is a fundamental and notoriously difficult problem in combinatorics, which has attracted the most attention since 1935.


Fix integers $m\ge 1$, $s\ge 0$ and $t\ge 1$ and define  $K_s+mK_t$ as the join graph of a clique $K_s$ and $m$ vertex-disjoint copies of $K_t$. This graph generalizes several classical structures, such as complete graphs, book graphs, and fan graphs.
Let $B_m^{(s)}=K_s+mK_1$ and $F_m=K_1+mK_2$ denote the book graph and fan graph, respectively. In particular, Rousseau and Sheehan \cite{R-S} showed that $r(B_m^{(2)},B_m^{(2)})=4m+2$ if $4m+1$ is a prime power.
A breakthrough result of Conlon \cite{Conlon} established that for each fixed $s\ge2$ and sufficiently large $m$,
\begin{align*}
r(B_m^{(s)},B_m^{(s)})=2^sm+o(m).
\end{align*}
The error term $o(m)$ of the upper bound has been improved to $O\big(\frac{m}{(\log\log\log m)^{1/25}}\big)$ by Conlon, Fox and Wigderson \cite{c-f} using a different method. We refer the reader to \cite{cl,cly,fly,nr} and references therein. 

The Ramsey numbers of fans also attract considerable attention. In particular, Lin, Li and Dong \cite{lld} showed that for any fixed $m\ge1$ and sufficiently large $n$,  $$r(F_m,F_n)=4n+1.$$
Zhang, Broersma and Chen \cite{T4} proved that the above equality holds if $n\ge \max\{m^2- \frac{m}{2},\frac{11m-8}{2}\}$. Moreover, they showed that for $m\le n\le \frac{m^2-m}{2}$,
\begin{align}\label{zhang}
r(F_m,F_n)\ge 4n+2.
\end{align}

Specifically, it is known that
\[
\frac{9n}{2}-5\le r(F_n,F_n)\le \frac{31n}{6}+15,
\]
where the lower bound is due to Chen, Yu and Zhao \cite{cyz} while the upper bound is proved by Dvo\u{r}\'{a}k and Metrebian \cite{T7} recently.
For more references, we refer the reader to \cite{c-l,lr,ll,zc} etc.

In this note, we investigate a natural generalization of these structures, namely the Ramsey number of the general graph $K_s+mK_t$ against a complete graph $K_n$.

Recently, Liu and Li \cite{Liu-Li} proved the following result.
\begin{theorem}[Liu and Li \cite{Liu-Li}]\label{liu-l}
Let $m\ge 1$, $t\ge 1$ and $s\ge 0$ be fixed integers. Then the following holds.

\medskip
(i) $R(K_s+mK_1,K_n)\le(m+o(1))\frac{n^{s}}{(\log n)^{s-1}}.$

\medskip
(ii) $R(K_s+mK_2,K_n)\le(1+o(1))\frac{n^{s+1}}{(\log n)^{s}}.$

\medskip
(iii) If $t\ge3$, then $R(K_s+mK_t,K_n)=O\!\left(\frac{n^{s+t-1}\log\log n}{(\log n)^{s+t-2}}\right).$
\end{theorem}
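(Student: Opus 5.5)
Since the paper cites this theorem from Liu and Li rather than proving it, I describe how I would prove parts (i)--(iii). The approach is induction on $s$ using neighbourhoods, together with an Ajtai--Komlós--Szemerédi/Shearer type bound for the base cases. Let $G$ be a $(K_s+mK_t)$-free graph on $N$ vertices with independence number less than $n$. The key observation is that for any vertex $v$, the neighbourhood $N(v)$ induces a $(K_{s-1}+mK_t)$-free graph. Hence $d(v)<R(K_{s-1}+mK_t,K_n)$, which bounds the maximum degree of $G$ by the $(s-1)$ case. To gain the logarithm at each step, I would use the local-density form of Shearer's theorem: a graph with average degree $d$ whose neighbourhoods are sparse (locally sparse, e.g.\ with few edges in each neighbourhood) has independence number $\Omega\!\left(\frac{N}{d}\log d\right)$. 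Equivalently, I would use the Alon--Krivelevich--Sudakov bound $\alpha\ge c\frac{N}{d}\log d$ for graphs in which every neighbourhood spans at most $d^2/f$ edges, which gives a loss of $\log f$ instead of $\log d$.

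\emph{Base cases.} For $s=0$, $G$ is $mK_t$-free. Remove a maximal family of vertex-disjoint copies of $K_t$; there are fewer than $m$ of them, so deleting at most $mt$ vertices leaves a $K_t$-free graph. Therefore $R(mK_t,K_n)\le R(K_t,K_n)+mt$. The classical bound $R(K_t,K_n)=O(n^{t-1}/(\log n)^{t-2})$ of Ajtai--Komlós--Szemerédi then gives the case $s=0$ of (iii). The same argument gives $R(mK_1,K_n)=m+\ldots$ and $R(mK_2,K_n)\le n+2m$, which are the $s=0$ anchors of (i) and (ii).

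\emph{Inductive step for (i) and (ii).} Suppose $N=(1+\epsilon)C\,n^{s+t-1}/(\log n)^{s+t-2}$. Every neighbourhood is $(K_{s-1}+mK_t)$-free and has independence number less than $n$, so $\Delta(G)<D:=R(K_{s-1}+mK_t,K_n)$. Each neighbourhood is also locally sparse: a vertex $u\in N(v)$ has at most $R(K_{s-2}+mK_t,K_n)$ neighbours inside $N(v)$, since $N(u)\cap N(v)$ is $(K_{s-2}+mK_t)$-free. When $s\ge2$, this gives $f\approx n/\log n$ and hence $\log f\sim\log n$. Applying the AKS/Shearer bound $\alpha(G)\ge (1-o(1))\frac{N}{D}\log f$ and substituting the inductive bound for $D$ yields $\alpha\ge n$, a contradiction. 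Tracking constants through Shearer's sharp form $(1-o(1))\frac{N\log d}{d}$ should give the constants $m$ and $1$ in (i) and (ii). For $s=1$ I would argue directly: a neighbourhood with $\alpha<n$ that is $mK_t$-free has size at most about $mn$ (for $t=1$) or $n$ (for $t=2$). For small $t$ the graph is then $K_{t+1}$-free up to bounded defects, so Shearer applies.

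\emph{Part (iii) and the main obstacle.} For $t\ge3$, the clique-free base graph $K_t$ is not triangle-free, so Shearer's bound does not apply directly. One must instead use the Ajtai--Komlós--Szemerédi/Li--Rousseau--Zang bound for $K_r$-free graphs, $\alpha\ge c\frac{N}{d}\frac{\log d}{\log\log d}$. This is exactly where the $\log\log n$ factor in (iii) enters. The hard step is the local-sparsity estimate: the edge count inside neighbourhoods must be controlled at every level of the induction, so that the logarithmic gain $\log f$ is genuinely of order $\log n$ rather than being lost. I would first try to apply the local-density lemma to the bound $\Delta\le D$ coming from the $(s-1)$ case, together with codegree bounds coming from the $(s-2)$ case. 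I would accept losing $\log\log n$ when the underlying structure is $K_t$-free with $t\ge3$. Removing that $\log\log n$ factor is precisely the improvement the present paper claims.
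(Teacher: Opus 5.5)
The paper only quotes this theorem. Your inductive step for $s\ge 2$ is, however, exactly its induction step in the proof of Theorem~\ref{thm:main}, and it works once you use the right lemma. The bound $\alpha(G)\ge(1-o(1))\frac{N}{D}\log f$ that you invoke is not what Alon--Krivelevich--Sudakov give, since their constant is unspecified. Shearer's sharp form needs a triangle-free graph. What carries the constants $m$ and $1$ is Lemma~\ref{Li-Rouss}. Your codegree bound says precisely that $\Delta(G_v)\le a$ with $a+1\le R(K_{s-2}+mK_t,K_n)$, and since $f_{a+1}$ is decreasing this gives $\alpha(G)\ge N\frac{\log(D/(a+1))-1}{D}$. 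With this substitution, (i) is fine; its cases $s\le 1$ are trivial.

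\textbf{The real gap: the case $s=1$, $t=2$ of (ii).} All of (ii) rests on this fan case $R(K_1+mK_2,K_n)$. ``$K_3$-free up to bounded defects, so Shearer applies'' is not an argument. You only know that $G_v$ has matching number less than $m$, so a single $G_v$ can contain a star with $\Theta(d(v))$ leaves. Hence Lemma~\ref{Li-Rouss} does not apply directly. To use Shearer's theorem you would first have to show that triangles are few on average, and then destroy them while losing only a $1-o(1)$ factor. Your sketch does neither, and this is exactly where the constant $1$ is decided.

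One way to close the gap is to rerun Shearer's averaging argument with $f_{a+1}$, which satisfies $(x+1)f_{a+1}(x)-\bigl((a+1)x-x^2\bigr)f_{a+1}'(x)=1$. That argument uses the neighbourhood hypothesis only through $e(G'_v)\le\frac{a}{2}\,d_{G'}(v)$, for every induced subgraph $G'$ and every $v\in V(G')$. Every $G'_v$ has a vertex cover of at most $2m-2$ vertices, so this holds with $a=4m-4$. This gives $\alpha(G)\ge Nf_{4m-3}(d)$ with $d\le n+2m$, hence $R(K_1+mK_2,K_n)\le(1+o(1))n^2/\log n$.

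\textbf{Part (iii): the case $s=1$, $t\ge 3$ is never treated.} Your $s=1$ discussion covers only $t\le 2$, and your inductive step needs $s\ge 2$. A bound for $K_r$-free graphs also needs a $K_r$-free graph. The missing observation is simply that $G$ is $K_{1+mt}$-free, because $K_1+mK_t\subseteq K_{1+mt}$. Shearer's $K_r$-free bound with $\Delta(G)<R(mK_t,K_n)=O(n^{t-1}/(\log n)^{t-2})$ then gives the $s=1$ case of (iii). The Lemma~\ref{Li-Rouss} induction propagates it with no further loss, since the ratio of the bounds for $R(K_{s-1}+mK_t,K_n)$ and $R(K_{s-2}+mK_t,K_n)$ is $n^{1-o(1)}$.

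Your assertion that the $\log\log n$ loss is forced is wrong, and removing it is the point of the paper. By Claim~\ref{clm:heavy}, at most $m-1$ vertices of $G_v$ have $G_v$-degree at least $L=R(K_{t-1},K_n)+(m-1)(t-1)\le D^{1-\eta}$. So $e(G_v)\le D^{2-\eta}$ and $\tau(G)\le ND^{2-\eta}$. Lemma~\ref{triangle}, the triangle-count form of the local-density bound you quote, then gives $\alpha(G)\ge c(\eta)N\log D/D$ at $s=1$. Combined with the same induction, this yields Theorem~\ref{thm:main}, which contains (iii) without the $\log\log n$ factor.
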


They asked whether the factor $\log\log n$ in the above theorem can be removed. 

\begin{problem}[Liu and Li \cite{Liu-Li}]\label{prb}
Let $m\ge 1$, $t\ge 3$ and $s\ge 0$ be fixed integers. Whether
\[
  R(K_s+mK_t,K_n)=O\!\left(\frac{n^{s+t-1}}{(\log n)^{s+t-2}}\right).
\]
\end{problem}

We settle this problem in the affirmative.

\begin{theorem}\label{thm:main}
Let $m\ge 1$, $t\ge 1$ and $s\ge 0$ be fixed integers. Then 
\[
  R(K_s+mK_t,K_n)=O\!\left(\frac{n^{s+t-1}}{(\log n)^{s+t-2}}\right).
\]
\end{theorem}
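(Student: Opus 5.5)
Write $k=s+t$ and $f_k(n)=n^{k-1}/(\log n)^{k-2}$. The plan is to prove the statement by induction on $s$. The base case $s=0$, i.e.\ $mK_t$ versus $K_n$, is handled directly. The induction step runs the standard neighbourhood argument, and the only place where something is lost is controlled by an Ajtai--Koml\'os--Szemer\'edi/Shearer type bound for locally sparse graphs rather than by a crude union bound. We may assume $t\ge3$, since $t\le 2$ is covered by Theorem~\ref{liu-l}(i),(ii).

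\textbf{Base case $s=0$.} Let $G$ be $mK_t$-free on $N$ vertices with $\alpha(G)<n$. Take a maximal family of vertex-disjoint copies of $K_t$ in $G$; it has fewer than $m$ members, so it covers a set $S$ of at most $(m-1)t$ vertices. Then $G-S$ is $K_t$-free, and hence
\[
N\le (m-1)t+R(K_t,K_n).
\]
Thus it suffices to know that $R(K_t,K_n)=O(n^{t-1}/(\log n)^{t-2})$. This is the classical Ajtai--Koml\'os--Szemer\'edi bound. It can also be derived from Shearer's theorem that a graph of average degree $d$ has independence number at least $\frac{N\log d}{d}(1+o(1))$, applied inductively to neighbourhoods.

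\textbf{Induction step.} Suppose the bound holds for $H'=K_{s-1}+mK_t$, so $R(H',K_n)\le C_{s-1}f_{k-1}(n)$. Let $G$ be $(K_s+mK_t)$-free on $N=Cf_k(n)$ vertices with $\alpha(G)<n$. For every vertex $v$, the neighbourhood $N(v)$ is $H'$-free, and $\alpha(G[N(v)])<n$. Hence
\[
\Delta(G)< C_{s-1}f_{k-1}(n)=:D.
\]
The naive greedy argument gives $\alpha\ge N/(D+1)$, which wastes a factor $\log n$. To recover it, use the fact that $G$ is locally sparse, as follows.

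\textbf{Local sparseness.} Consider the subgraph $G[N(v)]$, which has fewer than $D$ vertices. By the induction hypothesis applied to smaller parameters, every vertex $u\in N(v)$ has fewer than $C f_{k-2}(n)$ neighbours inside $N(v)$, because $N(u)\cap N(v)$ is $(K_{s-2}+mK_t)$-free (for $s\ge2$). Therefore
\[
e(G[N(v)])\le D\cdot O\bigl(f_{k-2}(n)\bigr)=O\!\left(\frac{D^2}{n}\right)\cdot\mathrm{polylog}.
\]
More precisely, the ratio $f_{k-2}/f_{k-1}$ is about $\log n/n$, so each neighbourhood spans at most $D^2/d^{\,c}$ edges for some $c>0$. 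By the Alon--Krivelevich--Sudakov theorem, a graph of maximum degree $D$ in which every neighbourhood spans at most $D^2/f$ edges has $\alpha\ge c' \frac{N}{D}\log f$. Here $\log f=\Theta(\log n)$, which gives
\[
\alpha(G)\ge c'\frac{N\log n}{D}\ge n
\]
once $C$ is large. This contradicts $\alpha(G)<n$ and completes the induction step.

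\textbf{Case $s=1$.} When $s=1$, the common neighbourhood $N(u)\cap N(v)$ is only $(mK_{t-1}\cup\ldots)$-restricted. Instead, one inducts jointly on $(s,t)$ through the parameter $k=s+t$, using the observation that $K_{s}+mK_t$ contains $K_{s+1}+mK_{t-1}$-like substructures, or directly that $N(u)\cap N(v)$ is $(K_{s-2+?}\ldots)$-free.

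\textbf{Main obstacle.} The delicate step is verifying the edge bound inside neighbourhoods with only a constant-factor loss when $s\in\{0,1\}$, that is, when the edge $uv$ lies inside a $K_t$ copy. To handle this I would take the edge $uv$ and its common neighbourhood, and argue by a maximal disjoint-copies cleanup, as in the base case, that after deleting $O(1)$ vertices the common neighbourhood is $(K_{s}+mK_{t-2})$-free or $(K_{s-1}+\cdots)$-free. This brings the count down to $k-2$ and lets the induction on $k$ supply the $O(f_{k-2}(n))$ codegree bound.
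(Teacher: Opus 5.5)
Your base case is fine, and so is your step for $s\ge2$. That step differs from the paper only in the final tool. You convert the codegree bound $\Delta(G[N(v)])<R(K_{s-2}+mK_t,K_n)$ into a neighbourhood edge bound and apply the Alon--Krivelevich--Sudakov sparse-neighbourhood theorem; the paper feeds the same codegree bound directly into Lemma~\ref{Li-Rouss}.

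The genuine gap is $s=1$, and it is the heart of the proof, not a side case. The $s=2$ step needs $\Delta(G)<R(K_1+mK_t,K_n)=O(n^t/(\log n)^{t-1})$, and the $s=3$ step needs the same bound as its codegree bound, so everything rests on it. Your suggested routes for $s=1$ fail because there is no per-edge codegree bound of the required order $n^{t-2}/(\log n)^{t-3}$. Nor can one be obtained by deleting $O(1)$ vertices from $N(u)\cap N(v)$.

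Here is a concrete local obstruction, with $t=3$ and $m\ge2$. Let $N(v)=\{u\}\cup W$, with $u$ complete to $W$ and $W$ a triangle-free graph with $\alpha(W)<n$ on $\Theta(n^2/\log n)$ vertices (such $W$ exists by Kim's bound $R(K_3,K_n)=\Theta(n^2/\log n)$). Every triangle in $N(v)$ uses $u$, so $N(v)$ is $mK_3$-free. Yet $N(u)\cap N(v)=W$ has size $\Theta(D)$ and contains $\Theta(|W|)$ disjoint edges and disjoint copies of $K_{1,m}$.

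The ``joint induction on $s+t$'' also cannot work. To pass information to common neighbourhoods you need an $X$ with $K_2+X\supseteq K_1+mK_t$. The smallest such $X$ is $K_{t-1}\cup(m-1)K_t$, which contains $K_t$ and so yields only a useless codegree bound $O(D)$.

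The missing idea, used in the paper, is to bound the number of edges of $G[N(v)]$ rather than its maximum degree. For each $v$, at most $m-1$ vertices $u\in N(v)$ satisfy $|N(u)\cap N(v)|\ge L:=R(K_{t-1},K_n)+mt$.
\begin{itemize}
\item Suppose $u_1,\dots,u_m$ all satisfy this. Since $\alpha(G)<n$, you can greedily pick a $K_{t-1}$ inside each $N(u_i)\cap N(v)$, avoiding every $u_j$ and all previously chosen cliques.
\item The sets $\{u_i\}\cup K_{t-1}$ are then $m$ disjoint copies of $K_t$ in $N(v)$. Together with $v$ they form $K_1+mK_t$, a contradiction.
\item Hence $2e(G[N(v)])\le (m-1)D+LD=O(D^2\log n/n)$, where $D\asymp n^{t-1}/(\log n)^{t-2}$.
\end{itemize}
The heavy vertices depend on $v$, so only an edge bound is available and Lemma~\ref{Li-Rouss} does not apply. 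The paper therefore sums the edge bound to get $\tau(G)\le N_1D^{2-\eta}$ and invokes the triangle-count lemma (Lemma~\ref{triangle}). Your Alon--Krivelevich--Sudakov theorem would work equally well at this point, since it needs only the neighbourhood edge bound. Once this claim is in place, it would give a single uniform argument for all $s\ge1$.
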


This bound is tight up to a constant factor for the case $(s,t)=(0,3)$, as it matches the classical result $R(K_3, K_n) = \Theta\!\left(n^2 / \log n\right)$ of Kim \cite{kim}. The cases when $t=1$ and $t=2$ have been established by Liu and Li \cite{Liu-Li}.

\section{Useful lemmas}

For a graph $G$, write $V(G)$ and $E(G)$ for its vertex and edge set, $e(G)=|E(G)|$,
$d(v)$ for the degree of a vertex $v$, and $d=d(G)$ for the average degree.
For $v\in V(G)$, let $N(v)$ be the neighborhood of $v$ and let $G_v:=G[N(v)]$ be the
subgraph induced by $N(v)$.

We will use the following triangle-counting lemma by Ajtai, Koml\'os and Szemer\'edi \cite{AKS81} (see \cite{aks} for a more general result).

\begin{lemma} [Ajtai, Koml\'os and Szemer\'edi \cite{AKS81}] \label{triangle}
For every $\eta>0$ there exists a constant $c=c(\eta)>0$ such that the following holds.
Let $G$ be a graph on $N$ vertices with average degree at most  $D$.
If $G$ contains at most $N D^{2-\eta}$ triangles, then
\[
\alpha(G)\ \ge\ c(\eta)\,\frac{N\log D}{D}.
\]
\end{lemma}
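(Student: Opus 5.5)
The plan is to reduce to the triangle-free case via random sampling. For the triangle-free case we use the classical bound of Ajtai, Koml\'os and Szemer\'edi, in Shearer's form: every triangle-free graph on $n$ vertices with average degree $d\ge 2$ has independence number at least $n\log d/(8d)$. The point of the sampling is to keep most vertices while killing almost all triangles. The price is that the average degree drops from $D$ to a small power of $D$, which only costs a constant factor depending on $\eta$ inside the logarithm.

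\emph{Step 1 (cleaning).} Since the average degree is at most $D$, Markov's inequality shows that at most $N/2$ vertices have degree exceeding $2D$. We delete them and obtain an induced subgraph $G'$ on $N'\ge N/2$ vertices, with maximum degree at most $2D$ and still at most $ND^{2-\eta}\le 2N'D^{2-\eta}$ triangles. We may assume $D$ is larger than a suitable constant $D_0(\eta)$. Otherwise the greedy bound $\alpha(G)\ge N/(D+1)$ already gives the claim after shrinking $c(\eta)$.

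\emph{Step 2 (sampling).} We choose $S\subseteq V(G')$ by including each vertex independently with probability $p=D^{-1+\eta/3}$. Let $X=|S|$, let $Y$ be the number of edges in $G'[S]$, and let $Z$ be the number of triangles in $G'[S]$. Then
\[
\mathbb E X=pN',\qquad \mathbb E Y\le p^2N'D,\qquad \mathbb E Z\le 2p^3N'D^{2-\eta}=2pN'D^{-\eta/3}.
\]
We set $W=X-2Z-Y/(4D^{\eta/3}p)$. For $D\ge D_0(\eta)$ this gives $\mathbb E W\ge pN'/2$, so some outcome $S$ has $W\ge pN'/2$. For that $S$ we have $|S|\ge pN'/2$, at most $|S|/2$ triangles, and $e(G'[S])\le 4pD^{\eta/3}\cdot |S|$.

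\emph{Step 3 (finish).} We remove one vertex from each triangle of $G'[S]$. This leaves a triangle-free graph $H$ on at least $|S|/2\ge pN'/4$ vertices. Its average degree is at most $d:=16D^{\eta/3}\cdot pD^{0}\cdot$const, i.e.\ $d=O(D^{\eta/3}\cdot pD)=O(D^{2\eta/3})$. We use that Shearer's function $n\log d/d$ is decreasing in $d$, so it suffices to plug in this upper bound on $d$. Since $\log d\ge (\eta/3)\log D$ for large $D$, we get
\[
\alpha(G)\ge\alpha(H)\ge \frac{pN}{8}\cdot\frac{(\eta/3)\log D}{C\,D^{2\eta/3}}\cdot\frac{1}{1}\ \gtrsim\ c(\eta)\frac{N\log D}{D}.
\]
Here we used $p=D^{-1+\eta/3}$. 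The exponent bookkeeping must be arranged so that the degree loss $D^{\eta/3}$ and the sampling loss $p$ combine to exactly $1/D$.

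\emph{Main obstacle.} The delicate point is this exponent bookkeeping in Steps 2 and 3. The sampling probability $p$ must be large enough that $p\cdot(\text{average degree of }H)\approx 1/D$, and small enough that $p^2D^{2-\eta}\ll 1$ so that triangles are negligible. I would first redo the computation with $p=D^{-1+\eta/3}$ and track $e(G'[S])/|S|\approx pD=D^{\eta/3}$ carefully, rather than using the cruder estimate above. The target is $|V(H)|\approx pN$, average degree $d\approx D^{\eta/3}$, and hence $\alpha(H)\gtrsim pN\cdot(\eta/3)\log D/D^{\eta/3}=(\eta/3)N\log D/D$. To get all three properties simultaneously, I use the single linear combination $W$ rather than separate concentration bounds.
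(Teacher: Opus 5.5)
The paper does not prove this lemma; it quotes it from Ajtai, Koml\'os and Szemer\'edi, so there is no in-paper argument to compare with. Your strategy is the standard way to derive the lemma, and it works: clean, sample at rate $p=D^{-1+\eta/3}$, delete one vertex from each surviving triangle, then apply Shearer's triangle-free bound. However, the bookkeeping you wrote down is wrong in a way that breaks two displayed claims, so the proposal as it stands does not prove the bound.

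\textbf{The edge term in $W$ is mis-normalized.} Since $\mathbb E Y\le p^2N'D=pN'\cdot D^{\eta/3}$, you get $\mathbb E[Y/(4D^{\eta/3}p)]\le N'/4$, not $pN'/4$. The bound is essentially attained: if $G$ is $D$-regular and triangle-free, then $G'=G$ and this term equals exactly $N/8\gg pN=\mathbb E X$. So $\mathbb E W<0$, and the claim $\mathbb E W\ge pN'/2$ is false. The conclusion $e(G'[S])\le 4pD^{\eta/3}|S|$ reflects the same error: it asks for an average degree a factor $p$ below the typical value $\approx pD=D^{\eta/3}$.

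\textbf{Step 3 uses the wrong degree.} There you take $d=O(D^{2\eta/3})$, and your display then gives only $pN\log D/D^{2\eta/3}=N\log D/D^{1+\eta/3}$. This misses the target by a factor $D^{\eta/3}$, so the final ``$\gtrsim$'' does not follow.

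\textbf{The fix.} It is exactly what your last paragraph anticipates: take $W=X-2Z-Y/(4D^{\eta/3})$, i.e.\ normalize the edge term by $4pD$. Then $\mathbb E W\ge pN'(1-4D^{-\eta/3}-\tfrac14)\ge pN'/2$ once $D^{\eta/3}\ge 16$. An outcome with $W\ge pN'/2$ has $|S|\ge pN'/2$, $Z\le |S|/2$ and $e(G'[S])\le 4D^{\eta/3}|S|$. Deleting one vertex per triangle leaves a triangle-free induced subgraph $H$ with $|V(H)|\ge |S|/2\ge pN/8$. Its average degree is at most $2e(G'[S])/(|S|/2)\le 16D^{\eta/3}$. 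Shearer's function $f(d)=(d\log d-d+1)/(d-1)^2$ is decreasing and satisfies $f(d)\ge(\log d-1)/d$. Hence
\[
\alpha(G)\ge\alpha(H)\ge \frac{pN}{8}\cdot\frac{\log(16D^{\eta/3})-1}{16D^{\eta/3}}\ge \frac{\eta}{384}\cdot\frac{N\log D}{D},
\]
using $p/D^{\eta/3}=1/D$.

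\textbf{Two minor points.}
\begin{itemize}
\item $p\le 1$ requires $\eta<3$. This is harmless, because for $D\ge 1$ the hypothesis with $\eta$ implies it with $\min(\eta,1)$.
\item $\log d/d$ is decreasing only for $d\ge e$. Invoke the monotonicity of Shearer's $f$ itself rather than that of $\log d/d$.
\end{itemize}
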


For the proof, a function from \cite{Li-R-96} is used.
\[
  f_m(x)=\int_0^1 \frac{(1-u)^{1/m}}{m+(x-m)u}\,du\qquad (x\ge 0).
\]

\begin{lemma}[Li and Rousseau \cite{Li-R-96}]\label{Li-Rouss}
Let $a \ge 0$ be an integer. Let $G$ be a graph with $N$ vertices and average degree $d$. For any vertex $v$ of $G$, if $G_v$ has maximum degree at most $a$, then  
\[
  \alpha(G) \ge Nf_{a+1}(d)\ge N\frac{\log(d/(a+1))-1}{d}.
\]
\end{lemma}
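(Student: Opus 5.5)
The plan is to follow Shearer's inductive method, adapted to local sparseness as in Li--Rousseau. We prove $\alpha(G)\ge N f_{a+1}(d)$ by induction on $N$, and then estimate the integral separately. Write $m=a+1$ and $f=f_m$.

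\textbf{Step 1 (analytic properties of $f$).} From the integral formula, $f$ is positive, decreasing and convex on $[0,\infty)$, with $f(0)=1$. Convexity holds because the integrand is convex in $x$ for each fixed $u$. Differentiating under the integral sign and integrating by parts in $u$ (the factor $(1-u)^{1/m}$ is designed for this), we obtain a linear first-order differential relation. Its exact form is to be computed, but schematically it reads
\[
1-(x+1)f(x)+\Bigl(x^2+x-\tfrac{m-1}{2}\,x\Bigr)f'(x)\ \ge\ 0\qquad(\ast)
\]
(possibly with equality). The coefficient of $f'$ is the quantity that appears in Step 2. I expect pinning down this identity with the right constant in front of the local-degree correction to be the main technical obstacle. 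If the constants do not match, I would adjust the bookkeeping in Step 2: for example, count $e(G_v)\le a\,d(v)/2$ and use $d(v)\le$ the same quantities that appear in $(\ast)$.

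\textbf{Step 2 (the inductive step).} For a vertex $v$, delete the closed neighbourhood $N[v]$ to obtain $G'_v$ on $N-1-d(v)$ vertices. Then $\alpha(G)\ge 1+\alpha(G'_v)$, and by induction $\alpha(G'_v)\ge (N-1-d(v))f(d'_v)$, where $d'_v$ is the average degree of $G'_v$. The number of edges removed is
\[
\sum_{u\in N(v)}d(u)-e(G_v)\ \ge\ \sum_{u\in N(v)}d(u)-\frac{a\,d(v)}{2},
\]
since $G_v$ has maximum degree at most $a$. This is the only place where the local hypothesis enters. It gives an upper bound on $(N-1-d(v))d'_v$.

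Now average over $v$. By convexity, $f(d'_v)\ge f(d)+f'(d)(d'_v-d)$. Since $f'\le 0$, upper bounds on $d'_v$ are exactly what we need. Summing over $v$ uses $\sum_v d(v)=Nd$ and
\[
\sum_v\sum_{u\in N(v)}d(u)=\sum_u d(u)^2\ge Nd^2 .
\]
The averaged lower bound $\frac1N\sum_v\bigl[1+(N-1-d(v))f(d'_v)\bigr]$ then reduces to $Nf(d)$ plus a multiple of the left side of $(\ast)$ evaluated at $x=d$. Since that quantity is nonnegative, some $v$ gives $\alpha(G)\ge Nf(d)$. The base cases (empty graph, and $d=0$ where $f(0)=1$) are trivial.

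\textbf{Step 3 (explicit estimate).} For $x>m$, bound the integral from below. Substitute $w=(x-m)u$, or split the range of $u$ at $1-1/\log(x/m)$. On the main range, $(1-u)^{1/m}$ is $1-o(1)$ only up to a constant loss, and
\[
\int_0^1\frac{du}{m+(x-m)u}=\frac{\log(x/m)}{x-m}.
\]
Accounting for the loss from $(1-u)^{1/m}$ gives a cost of at most $1/x$, so
\[
f_m(x)\ge \frac{\log(x/m)-1}{x}.
\]
For $x\le e\,m$ the right-hand side is nonpositive, so the inequality is trivial there. This yields the second inequality with $m=a+1$.
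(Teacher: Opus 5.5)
The paper gives no proof of this lemma; it only quotes Li and Rousseau. Your Shearer-type induction is the right argument and it does close: delete $N[v]$, use $e(G_v)\le a\,d(v)/2$, take the tangent line of the convex $f$ at $d$, and average using $\sum_u d(u)^2\ge Nd^2$.

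\textbf{The gap.} The one identity everything hinges on is left uncomputed, and the form you wrote for $(\ast)$ is false. For $m=1$, $f_1(x)=\frac{x\log x-x+1}{(x-1)^2}$ is Shearer's function, which satisfies $1-(x+1)f_1+(x-x^2)f_1'=0$. So your left-hand side equals $2x^2f_1'(x)<0$, and for every $m$ it is negative for large $x$. It is also not what your own Step 2 produces. Your bookkeeping gives $2e(G-N[v])\le Nd-2\sum_{u\in N(v)}d(u)+a\,d(v)$, hence
\[
\frac1N\sum_{v}\bigl(N-1-d(v)\bigr)\bigl(d'_v-d\bigr)\ \le\ d+(a+d)d-2d^2\ =\ d(m-d).
\]
So what is actually needed is $1-(x+1)f(x)+x(m-x)f'(x)\ge 0$.

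\textbf{The fix.} For $f=f_m$ this holds with equality. Put $D=m(1-u)+xu$, so that $\partial_u(1/D)=-(x-m)/D^2$. One integration by parts, whose boundary terms vanish for $x>0$, gives
\[
x(x-m)f_m'(x)=-x f_m(x)+\frac{x}{m}\int_0^1\frac{u(1-u)^{1/m-1}}{D}\,du,
\]
and hence
\[
(x+1)f_m(x)+x(x-m)f_m'(x)=\frac1m\int_0^1(1-u)^{1/m-1}\,\frac{m(1-u)+xu}{D}\,du=1 .
\]
So Step 2 matches the identity exactly, and the ``adjust the bookkeeping'' fallback is unnecessary. You should also state two small points:
\begin{itemize}
\item the local hypothesis is inherited by the induced subgraph $G-N[v]$, which the induction needs;
\item the tangent line is only used at $d>0$ (indeed $f_m'(0^+)=-\infty$), which your base case covers.
\end{itemize}

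\textbf{Step 3.} As sketched, this does not work. Splitting at $u=1-\delta$ only gives $(1-u)^{1/m}\ge\delta^{1/m}$ on the main range, and this tends to $0$ for $\delta=1/\log(x/m)$, so it cannot produce the additive constant $-1$. Use $1-(1-u)^{1/m}\le u$ instead. For $x>m$,
\[
f_m(x)\ \ge\ \frac{\log(x/m)}{x-m}-\int_0^1\frac{u\,du}{m+(x-m)u}=\frac{\log(x/m)}{x-m}-\frac{1}{x-m}+\frac{m\log(x/m)}{(x-m)^2}.
\]
The subtracted integral is at most $1/x$, because that is equivalent to $\log y\ge 1-1/y$ with $y=x/m$. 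This confirms your ``cost at most $1/x$'' and gives $f_m(x)\ge\frac{\log(x/m)}{x-m}-\frac1x\ge\frac{\log(x/m)-1}{x}$. For $x\le em$ the bound is trivial, as you say.
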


\ignore{We will use the well-known estimate

\begin{lemma} [Ajtai, Koml\'os and Szemer\'edi \cite{AKS80}] \label{ramseyub}
For every fixed integer $t \ge 2$, we have
\[
R(K_r,K_n)=O\!\left(\frac{n^{r-1}}{(\log n)^{r-2}}\right)
\]
\end{lemma}
}

\section{Proof of Theorem \ref{thm:main}}

Let $m\ge 1$, $t\ge 1$ and $s\ge 0$ be fixed integers. We shall prove that there exists a positive constant $C_s$ depending only on $m,t,s$ such that
$$
  R(K_s+mK_t,K_n)\le C_s\frac{n^{s+t-1}}{(\log n)^{s+t-2}}=:N_s.
$$
The proof will proceed by induction on $s\ge 0$.

\subsection[The case s=0]{The case $s=0$}

It is well known \cite{AKS80} that $R(K_t,K_n)=O(\frac{n^{t-1}}{(\log n)^{t-2}})$ for fixed $t\ge1$, so we have
\begin{equation}\label{s=0}
R(mK_t,K_n) \le (m-1)t+R(K_t, K_n)\le C_0\frac{n^{t-1}}{(\log n)^{t-2}}
\end{equation}
for some constant $C_0>0$,
which matches the statement for $s=0$.

\subsection[The case s=1]{The case $s=1$}
We shall prove $R(K_1+mK_t,K_n)\le C_1\frac{n^{t}}{(\log n)^{t-1}}=:N_1$.
The assertion is trivial if $t=1$ since $R(K_{1,m},K_n)=(n-1)m+1$ from the classical result \cite{cha}. Thus we assume $t\ge2$.
Suppose for a contradiction that there exists a $(K_{1}+mK_{t})$-free graph
$G$ on $N_1$ vertices with $\alpha(G)<n$.

\paragraph{Bounding the maximum degree.}
Since $G$ is $(K_{1}+mK_{t})$-free, for every vertex $v$ the subgraph $G_v$ is $mK_t$-free.
Moreover, $\alpha(G_v)\le \alpha(G)<n$.
Therefore, by using (\ref{s=0}), the maximum degree $\Delta$ of $G$ satisfies that
\begin{equation}\label{eq:d-bound-s1}
  \Delta\le R(mK_t,K_n)-1\le C_0\frac{n^{t-1}}{(\log n)^{t-2}} =:D
\end{equation}

\paragraph{Bounding the number of triangles.}
Let $\tau(G)$ denote the number of triangles in $G$.
We shall show that $\tau(G)\le N_1 D^{2-\eta}$ for some fixed $\eta=\eta(t)\in(0,1)$.

Let $ L= R(K_{t-1},K_n) + (m-1)(t-1) $. A similar argument using \cite{AKS80} yields that
\begin{equation}\label{L-D}
L = O\!\left( \frac{n^{t-2}}{(\log n)^{t-3}} \right)+ (m-1)(t-1) \sim D^{\frac{t-2+o(1)}{t-1}} \le D^{1-\eta}
\end{equation}
for some fixed $\eta$ with $0<\eta<\frac{1}{t-1}$.

The following claim can be used to control the number of edges inside $G_v$.

\begin{claim}\label{clm:heavy}
For every $v\in V(G)$, there are at most $m-1$ vertices $u\in N(v)$ with
\[
\deg_{G_v}(u)=|N(u)\cap N(v)|\ \ge\ L.
\]
\end{claim}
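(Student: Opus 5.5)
Suppose, for a contradiction, that some $v\in V(G)$ has $m$ distinct vertices $u_1,\dots,u_m\in N(v)$ with $|N(u_i)\cap N(v)|\ge L$ for every $i$. The plan is to build $m$ vertex-disjoint copies of $K_t$ inside $G_v$ greedily, one through each $u_i$. Together with $v$ these give a copy of $K_1+mK_t$ in $G$, contradicting the choice of $G$.

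The greedy step goes as follows. Suppose disjoint cliques $Q_1,\dots,Q_{i-1}$ of order $t$ in $G_v$ have been found, with $u_j\in Q_j$. Put $W_i=(N(u_i)\cap N(v))\setminus\bigl(\bigcup_{j<i}Q_j\cup\{u_1,\dots,u_m\}\bigr)$. Some care is needed over which vertices to exclude. The simplest choice is to remove only the vertices of $Q_1,\dots,Q_{i-1}$, which are at most $(m-1)t$ in number, and to make sure $u_i$ itself is not used by an earlier clique. For that reason I would remove the set $U=\{u_{i+1},\dots,u_m\}$ as well from each later selection. In total at most $(m-1)(t-1)$ vertices other than the $u_j$'s are removed, plus at most $m-1$ of the $u_j$'s themselves.

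The counting is where the definition $L=R(K_{t-1},K_n)+(m-1)(t-1)$ matters. The extra term $(m-1)(t-1)$ is exactly the number of non-$u$ vertices used by earlier cliques. I would handle this cleanly by choosing the cliques in $G_v-\{u_1,\dots,u_m\}$ beyond their anchors. Then $|W_i|\ge L-(m-1)(t-1)-(m-1)$, and the stray $-(m-1)$ is absorbed by adjusting the constant: alternatively one can note that $R(K_{t-1},K_n)$ has slack, or replace $L$ by $L+m$, which does not affect (\ref{L-D}). Since $G[W_i]$ is an induced subgraph of $G$, we have $\alpha(G[W_i])\le\alpha(G)<n$. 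So as long as $|W_i|\ge R(K_{t-1},K_n)$, the set $W_i$ contains a $K_{t-1}$. Adding $u_i$, which is adjacent to all of $W_i$, gives a $K_t$, which we take as $Q_i$. It lies in $N(v)$ and is disjoint from the earlier $Q_j$.

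Repeating this for $i=1,\dots,m$ gives $mK_t$ in $G_v$, and hence $K_1+mK_t$ with centre $v$, a contradiction. The only real obstacle is the bookkeeping in the second paragraph: ensuring the anchors $u_i$ are not consumed by earlier cliques while the lower bound on $|W_i|$ stays at least $R(K_{t-1},K_n)$. If the given $L$ is off by the small additive $m-1$, I would simply enlarge $L$ by a constant, which is harmless for (\ref{L-D}) and the later triangle count. The case $t=2$ is degenerate: $K_{t-1}=K_1$ is a single vertex, so the argument reduces to picking distinct neighbours.
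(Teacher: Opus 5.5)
Your argument is the same greedy construction the paper uses: for each anchor $u_i$, pick a $K_{t-1}$ inside $N(u_i)\cap N(v)$ disjoint from the earlier choices, then adjoin $v$. It is correct once $L$ is taken to be $R(K_{t-1},K_n)+(m-1)t$.

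Your bookkeeping is more careful than the paper's. The paper's allowance $(m-1)(t-1)$ ignores that a greedily chosen $K_{t-1}$ may contain another anchor $u_j$. The enlargement is genuinely needed, because the claim with the paper's $L$ does not follow from $(K_1+mK_t)$-freeness and $\alpha(G)<n$ alone. For $t=2$ one has $L=m$, and $G=K_{2m}$ is $(K_1+mK_2)$-free with $\alpha(G)=1<n$. Yet all $2m-1$ vertices of each $G_v$ have degree $2m-2\ge m$ there when $m\ge 2$.

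So drop the alternative about ``slack'' in $R(K_{t-1},K_n)$, which is unjustified, and simply enlarge $L$ by $m-1$. This only changes the factor $L+m-1$ in (\ref{eG}) by an additive constant and leaves (\ref{L-D}) intact.
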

{\bf Proof.}
Suppose to the contrary that there exist distinct vertices $u_1,\dots,u_m\in N(v)$ such that
$|N(u_i)\cap N(v)|\ge L= R(K_{t-1},K_n) + (m-1)(t-1)$ for all $i\in[m]$. Since $\alpha(G)<n$, we can greedily find pairwise vertex-disjoint copies of $K_{t-1}$. Now $u_i$ together with the corresponding 
$K_{t-1}$  spans a $K_t$, and these $K_t$'s are vertex-disjoint and all lie in $N(v)$.
These copies of $K_t$, together with the vertex $v$, form a copy of $K_1+mK_t$ in $G$, a contradiction.
\hfill$\Box$

\medskip
For every fixed $v\in V(G)$, by  (\ref{eq:d-bound-s1}), (\ref{L-D}), and Claim~\ref{clm:heavy}, we have
\begin{align}\label{eG}
2e(G_v)=
\sum_{u\in N(v)}\deg_{G_v}(u)
&\le (m-1)\,d(v) + (d(v)-(m-1))L\nonumber
\\&\le (L+m-1)d(v) \nonumber
\\&\le D^{2-\eta}.
\end{align}
Hence summing (\ref{eG}) over all vertices $v\in V(G)$ and using $\tau(G)=\frac13\sum_{v}e(G_v)$, we obtain
\[
\tau(G)\ \le\ \frac13\sum_{v\in V(G)}e(G_v)
\le N_1D^{2-\eta}.
\]

Since $D=C_0\,n^{t-1}/(\log n)^{t-2}$ from (\ref{eq:d-bound-s1}), we have $\log D \ge \frac{t-1}{2}\log n$ for all sufficiently large $n$. By Lemma~\ref{triangle}, we obtain
\[
\alpha(G)\ \ge\ c(\eta)\,\frac{N_1\log D}{D} \ge\ c(\eta)\,\frac{N_1}{D}\cdot \frac{t-1}{2}\log n
\ =\ c(\eta)\,\frac{t-1}{2}\cdot \frac{C_1}{C_0}\,n.
\]
Choose $C_1$ large enough so that
\[
c(\eta)\,\frac{t-1}{2}\cdot \frac{C_1}{C_0}\ \ge\ 1.
\]
Thus, we obtain $\alpha(G)\ge n$, which contradicts the assumption that $\alpha(G)<n$ and completes the proof.

\subsection[Induction step: s>=2]{Induction step: $s\ge 2$}

Now we assume that Theorem~\ref{thm:main} holds for $s-1$ and $s-2$. 
Suppose for a contradiction that there exists a $(K_s+mK_t)$-free graph $G$ on $N_s$ vertices with $\alpha(G)<n$.
Let $d$ be the average degree of $G$.

Since for any vertex $v\in V(G)$, the subgraph $G_v$ is $(K_{s-1}+mK_t)$-free and
$\alpha(G_v)\le \alpha(G)<n$, 
by the induction hypothesis for $s-1$, we have
\[
  d(v) \le R(K_{s-1}+mK_t,K_n)-1\le N_{s-1}.
\]
Hence the average degree
\begin{equation*}
  d \le N_{s-1}.
\end{equation*}

Moreover, since $G$ is $(K_s+mK_t)$-free,  by the induction hypothesis for $s-2$, we have 
\[
  \Delta(G_v)\le R(K_{s-2}+mK_t,K_n)-1 \le N_{s-2}.
\]

Now we apply Lemma \ref{Li-Rouss} with $d \le N_{s-1}$ and $a\le  N_{s-2}$ to obtain that
\begin{align}
\alpha(G)
&\ge N_s \frac{\log\!\bigl(d/(a+1)\bigr)-1}{d} \notag\\
&\ge \frac{C_s}{C_{s-1}} \frac{n}{\log n}\,
   (\log n-\log\log n) \notag\\
&= \left(\frac{C_s}{C_{s-1}}-o(1)\right)n .
\end{align}

Choosing $C_s$ sufficiently large yields $\alpha(G)\ge n$, contradicting $\alpha(G)<n$.
This completes the induction step, thereby proving Theorem \ref{thm:main}. \hfill$\Box$

\end{spacing}

\end{document}